\newtheorem{thm}{Theorem}
\newtheorem{lem}[thm]{Lemma}
\newcommand{\prob}{\mathbb{P}}
\newcommand{\includegraphicsA}[2][1]{\includegraphics[#1]{#2.pdf}}
\newcommand{\includegraphicsA}[2][1]{\includegraphics[#1]{#2.eps}}
\newcommand{\symgroup}{\mathfrak{S}}
\newcommand{\bumping}{\textnormal{\textsf{B}}}
\title[Limit shapes of bumping routes]{Limit shapes of bumping routes \\ in the Robinson-Schensted correspondence}
\author{Dan Romik}
\address{Department of Mathematics,
University of California, Davis,
One Shields Avenue, Davis, CA 95616, USA}
\email{romik@math.ucdavis.edu}
\author{Piotr \'Sniady}
\address{Zentrum Mathematik, M5,
Technische Universit\"at M\"unchen, \linebreak
Boltzmannstrasse 3,
85748 Garching, Germany \newline \indent
Instytut Matematyczny, Polska Akademia Nauk, \linebreak
\mbox{ul.~\'Sniadec\-kich 8,} 00-956 Warszawa, Poland
 \newline
\indent 
Instytut Matematyczny,
Uniwersytet Wroc\l{}awski,  \mbox{pl.~Grunwaldzki~2/4,} 50-384
Wroclaw, Poland
} 
\email{piotr.sniady@tum.de, piotr.sniady@math.uni.wroc.pl}
\keywords{Robinson-Schensted correspondence, bumping routes, Young tableau, limit shape}
\subjclass[2010]{
68Q87 
(Primary);
60C05, 
05E10, 
20C30  
(Secondary)%
}
\begin{document}

\begin{abstract}
We prove a limit shape theorem describing the asymptotic shape of bumping routes when the Robinson-Schensted algorithm is applied to a finite sequence of independent, identically distributed random variables  
with the uniform distribution $U[0,1]$ on the unit interval, followed by an insertion of a deterministic number $\alpha$. 
The bumping route converges after scaling, in the limit as the length of the sequence tends to infinity, to an
explicit, deterministic curve depending only on $\alpha$. 
This extends our previous result on the asymptotic determinism of Robinson-Schensted insertion, and answers a question posed by Moore in 2006.
\end{abstract}

\maketitle

\section{Introduction}

Let $\symgroup_n$ denote the symmetric group of order $n$. Recall that the \textbf{Robinson-Schensted correspondence} associates with a permutation $\sigma^{(n)}\in \symgroup_n$ a pair of standard Young tableaux $(P_n,Q_n)$ whose common shape $\lambda$ is a Young diagram of order $n$. 
A fruitful area of study concerns asymptotic properties of the Robinson-Schensted shape $\lambda$ and the tableaux $P_n,Q_n$ associated with a \emph{random} permutation $\sigma^{(n)}$ sampled from the uniform distribution on $\symgroup_n$. The existing results on this subject are too numerous to list here, but some of the important highlights of the theory are the limit shape result of Logan-Shepp \cite{loganshepp} and Vershik-Kerov \cite{vershikkerov1, vershikkerov2}, which led to the solution of the so-called Ulam-Hammersley problem on the typical length of a longest increasing subsequence in random permutations; and the celebrated Baik-Deift-Johansson theorem \cite{baik-deift-johansson} and its refinements and variants \cite{baik-deift-johansson2, borodin-okounkov-olshanski, johansson} that tied the behavior of longest increasing subsequences in random permutations to the Tracy-Widom distribution and other naturally-occurring stochastic processes from random matrix theory. See the book \cite{romik} for a survey of many of these developments that also touches on diverse connections to random growth processes, interacting particle systems, representation theory and more.

In this paper we continue this line of investigation by studying the \textbf{bumping route} computed during the application of an insertion procedure, which is the fundamental building block of the Robinson-Schen\-sted correspondence.
Let us recall briefly the relevant definitions. A \textbf{Young diagram} $\lambda$ of order~$n$ is an integer partition of $n$, that is, an array of positive integers $\lambda_1\ge\dots\ge\lambda_m\ge 0$ such that $n=\sum_{j=1}^m \lambda_j$, represented graphically as a diagram of left-justified square boxes wherein the $j$th row contains $\lambda_j$ boxes.
If $\lambda$ is a Young diagram of order $n$ and $x_1,\dots,x_n$ are distinct real numbers, an \textbf{increasing tableau} of shape $\lambda$ and entries given by $x_1,\dots, x_n$ is a filling of the boxes of $\lambda$ with the numbers $x_1,\dots,x_n$ that is increasing along rows and columns. A \textbf{standard Young tableau} is such an increasing tableau whose entries are precisely the numbers $1,\dots,n$.

Given an increasing tableau $P$ with entries $x_1,\dots, x_n$ and a number $z$ distinct from $x_1,\dots,x_n$, the \textbf{insertion procedure} applied to $P$ and $z$ produces a new increasing tableau $P\leftarrow z$ with entries $x_1,\dots,x_n,z$ whose shape $\lambda^+$ is obtained from $\lambda$ by the addition of a single box. 
The new tableau $P\leftarrow z$ is computed by performing a succession of \textbf{bumping steps} whereby the number $z$ is inserted into the first row of the diagram, displacing an existing entry from the first row;
the displaced entry is bumped onto the second row, and in turn bumps an entry of the second row onto the third row; and so on, until finally the entry being bumped settles down in an unoccupied position outside the diagram $\lambda$. In each row, the position where the bumping (or settling-down, in the last step) occurs is the leftmost position containing an entry bigger than the incoming number, or a new unoccupied position to the right of all existing entries if no such entry exists. 
An example is shown in Figure~\ref{fig:bumping-example}. 

\newcommand{\mycolor}{blue!50}
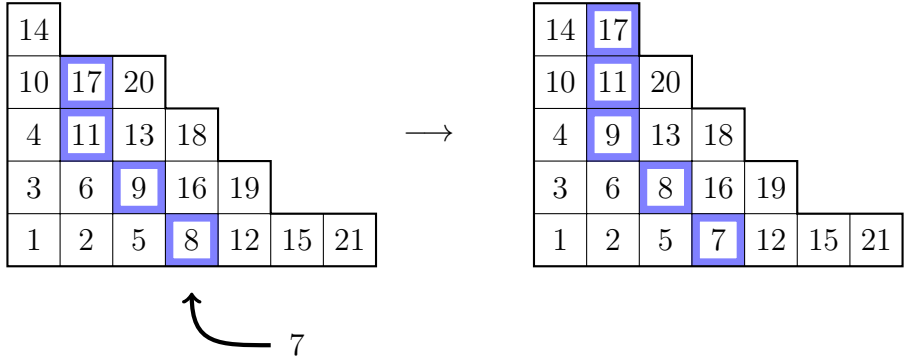
\begin{figure}[t]
\begin{center}
\begin{tikzpicture}[scale=0.7]
\begin{scope}
      \begin{scope} \clip (3,0) rectangle +(1,1); \draw[line width=6pt,\mycolor] (3,0) rectangle +(1,1); \end{scope}
      \begin{scope} \clip (2,1) rectangle +(1,1); \draw[line width=6pt,\mycolor] (2,1) rectangle +(1,1); \end{scope}
      \begin{scope} \clip (1,2) rectangle +(1,1); \draw[line width=6pt,\mycolor] (1,2) rectangle +(1,1); \end{scope}
      \begin{scope} \clip (1,3) rectangle +(1,1); \draw[line width=6pt,\mycolor] (1,3) rectangle +(1,1); \end{scope}
      \draw (0.5,0.5) node {1};
      \draw (1.5,0.5) node {2};
      \draw (2.5,0.5) node {5};
      \draw (3.5,0.5) node {8};
      \draw (4.5,0.5) node {12};
      \draw (5.5,0.5) node {15};
      \draw (6.5,0.5) node {21};
      \draw (0.5,1.5) node {3};
      \draw (1.5,1.5) node {6};
      \draw (2.5,1.5) node {9};
      \draw (3.5,1.5) node {16};
      \draw (4.5,1.5) node {19};
      \draw (0.5,2.5) node {4};
      \draw (1.5,2.5) node {11};
      \draw (2.5,2.5) node {13};
      \draw (3.5,2.5) node {18};
      \draw (0.5,3.5) node {10};
      \draw (1.5,3.5) node {17};
      \draw (2.5,3.5) node {20};
      \draw (0.5,4.5) node {14};
      \draw[thick](0,0) -- (7,0) -- (7,1) -- (5,1) -- (5,2) -- (4,2) -- (4,3) -- (3,3) -- (3,4) -- (1,4) -- (1,5) -- (0,5) -- cycle;
      \begin{scope}
      \clip (0,0) -- (7,0) -- (7,1) -- (5,1) -- (5,2) -- (4,2) -- (4,3) -- (3,3) -- (3,4) -- (1,4) -- (1,5) -- (0,5) -- cycle;
      \draw (0,0) grid (50,50);
      \end{scope}
    \draw (5.5,-1.5) node {7};
    \draw[->,ultra thick] (5,-1.5) .. controls (4,-1.5) and (3.5,-1.5) .. (3.5,-0.5);
\end{scope}
\draw (8,2.5) node {$\longrightarrow$};
\begin{scope}[shift={(10,0)}]
    \begin{scope} \clip (3,0) rectangle +(1,1); \draw[line width=6pt,\mycolor] (3,0) rectangle +(1,1); \end{scope}
    \begin{scope} \clip (2,1) rectangle +(1,1); \draw[line width=6pt,\mycolor] (2,1) rectangle +(1,1); \end{scope}
    \begin{scope} \clip (1,2) rectangle +(1,1); \draw[line width=6pt,\mycolor] (1,2) rectangle +(1,1); \end{scope}
    \begin{scope} \clip (1,3) rectangle +(1,1); \draw[line width=6pt,\mycolor] (1,3) rectangle +(1,1); \end{scope}
    \begin{scope} \clip (1,4) rectangle +(1,1); \draw[line width=6pt,\mycolor] (1,4) rectangle +(1,1); \end{scope}
    \draw (0.5,0.5) node {1};
    \draw (1.5,0.5) node {2};
    \draw (2.5,0.5) node {5};
    \draw (3.5,0.5) node {7};
    \draw (4.5,0.5) node {12};
    \draw (5.5,0.5) node {15};
    \draw (6.5,0.5) node {21};
    \draw (0.5,1.5) node {3};
    \draw (1.5,1.5) node {6};
    \draw (2.5,1.5) node {8};
    \draw (3.5,1.5) node {16};
    \draw (4.5,1.5) node {19};
    \draw (0.5,2.5) node {4};
    \draw (1.5,2.5) node {9};
    \draw (2.5,2.5) node {13};
    \draw (3.5,2.5) node {18};
    \draw (0.5,3.5) node {10};
    \draw (1.5,3.5) node {11};
    \draw (2.5,3.5) node {20};
    \draw (0.5,4.5) node {14};
    \draw (1.5,4.5) node {17};
    \draw[thick](0,0) -- (7,0) -- (7,1) -- (5,1) -- (5,2) -- (4,2) -- (4,3) -- (3,3) -- (3,4) -- (2,4) -- (2,5) -- (0,5) -- cycle;
    \begin{scope}
    \clip (0,0) -- (7,0) -- (7,1) -- (5,1) -- (5,2) -- (4,2) -- (4,3) -- (3,3) -- (3,4) -- (2,4) -- (2,5) -- (0,5) -- cycle;
    \draw (0,0) grid (50,50);
    \end{scope}
\end{scope} 
\end{tikzpicture}

\caption{Inserting a number into a tableau results in a cascade of bumping events. The bumping route is the sequence of positions where a bumping occurred.}
\label{fig:bumping-example}
\end{center}
\end{figure}

Define the \textbf{bumping route} $\bumping_{P,z}$ associated with an insertion procedure performed on the tableau $P$ with a new input~$z$ to be the sequence of positions where a bumping occurred during the insertion, together with the position of the final box added to the shape. The $j$th position in the bumping route is of the form $\big(b_{P,z}(j),j\big)$, so it is convenient to encode the bumping route using only the $x$-coordinates of the positions, which form a monotone nonincreasing sequence of positive integers $b_{P,z}(1) \ge b_{P,z}(2) \ge \dots \ge b_{P,z}(k_{P,z})$ whose length we denote by $k_{P,z}$. For example, the bumping route associated with the insertion step in Figure~\ref{fig:bumping-example} is $(4,3,2,2,2)$.

The \textbf{insertion tableau} corresponding to a sequence $x_1,\dots,x_n$ is defined as 
the outcome of the iterative application of the insertion procedure
\[ P(x_1,\dots,x_n):=\Big( \big( (\emptyset \leftarrow x_1) \leftarrow x_2 \big) \leftarrow \dots \Big) \leftarrow x_n,\]
starting with the empty tableau $\emptyset$.
The \textbf{Robinson-Schensted correspondence} associates with a permutation $\sigma^{(n)}\in \symgroup_n$ a pair of standard Young tableaux $(P_n,Q_n)$ with the first one $P_n=P\big(\sigma^{(n)})=P\big(\sigma^{(n)}(1),\dots,\sigma^{(n)}(n)\big)$ being the insertion tableau corresponding to the permutation; the definition of the second tableau $Q_n$, known as the \textbf{recording tableau}, will not be necessary for our purposes. More details on Young tableaux, the Robinson-Schensted correspondence and their properties can be found in several well-known sources such as \cite{fulton, knuthvol2, stanleyvol2}.

If $\sigma^{(n)}$ is a uniformly random permutation of order $n$, the bumping route computed in the last insertion step 
performed while calculating the insertion tableau $P_n$
is equal to $\bumping_{P_{n-1},\sigma^{(n)}(n)}$, where 
$P_{n-1}=P\left(\sigma^{(n)}(1),\dots,\sigma^{(n)}(n-1)\right)$ denotes the insertion tableau computed from the truncated sequence. The question we wish to address is that of understanding the asymptotic behavior of this bumping route.

Since the computation depends only on the relative order of the numbers $\sigma^{(n)}(j), 1\le j\le n$, it will be equivalent, and more convenient, to formulate the result in terms of a sequence $X_1,\dots,X_n$ of independent and identically distributed (i.i.d.) random variables with the uniform distribution $U[0,1]$ on the unit interval $[0,1]$, which gives a canonical way of realizing uniformly random order structures of all orders $n$ simultaneously. If we denote by $T_{n}=P(X_1,\dots,X_n)$ the corresponding insertion tableau, then the bumping route $\bumping_{T_{n-1},X_n}$ is equal in distribution to $\bumping_{P_{n-1},\sigma^{(n)}(n)}$.

Our result will pertain to an even more general scenario in which the final input $X_n$ is taken to be an arbitrary (non-random) number $\alpha$ in the unit interval $[0,1]$. Note that, by obvious monotonicity properties of the insertion procedure, as $\alpha$ increases from $0$ to $1$, the bumping route is deformed monotonically between the two extreme cases $\alpha=0$ and $\alpha=1$, where in the case $\alpha=0$ the bumping route will be the first column of the diagram and an additional new box at the top of the first column, and in the case $\alpha=1$ the bumping route consists of a single new box at the end of the first row of the diagram. 
Note also that the bumping route (except for the last box)
is contained in the Young diagram of the tableau $T_{n-1}$; this random Young diagram, whose distribution is known as the \textbf{Plancherel measure} of order $n-1$, converges to a well-known limit shape discovered in the celebrated works of Logan-Shepp \cite{loganshepp} and Vershik-Kerov \cite{vershikkerov1, vershikkerov2}.

Figure~\ref{fig:simulation} shows the bumping routes $\bumping_{T_{n-1},\alpha}$ for various values of $\alpha$ in a numerical simulation with $n=10^4$. Our goal will be to show that the bumping routes converge after scaling to a family of deterministic limiting curves, which are shown in Figure~\ref{fig:theory}. As preparation for the precise formulation of this result, let us first define this family of curves. First, define auxiliary functions $F$, $\Omega$, $u_\alpha$, $v_\alpha$, $x_\alpha$, $y_\alpha$, $\kappa$ by

\begin{figure}[t]
\hfill
\subfloat[][]{\includegraphicsA[width=0.45\textwidth]{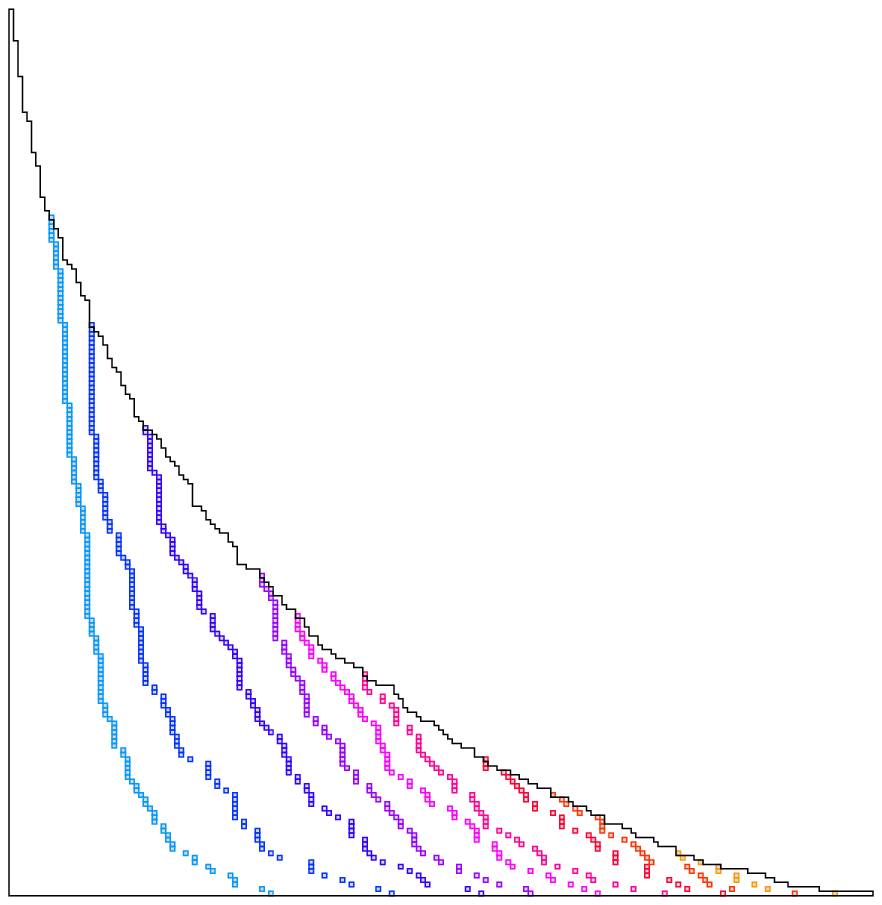} \label{fig:simulation}}
\hfill
\subfloat[][]{\includegraphicsA[width=0.45\textwidth]{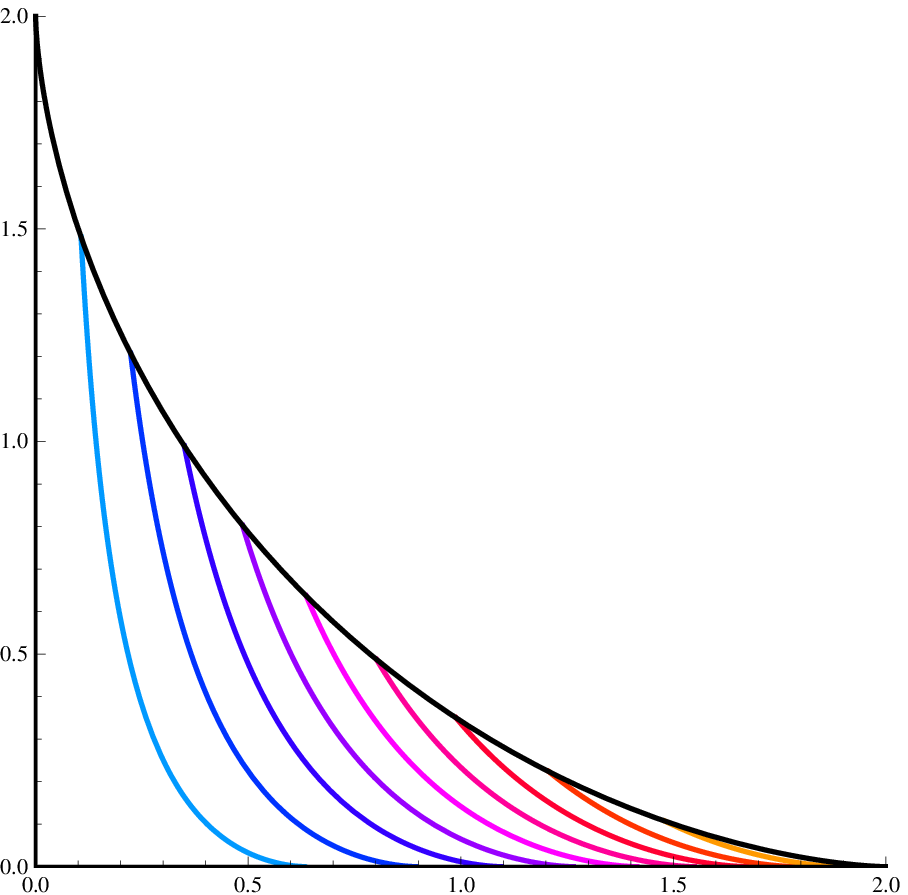} \label{fig:theory}}
\hfill

\caption{\protect\subref{fig:simulation} Bumping routes $\bumping_{T_{n-1},\alpha}$ in a simulation with $n=10^4$ and the values $\alpha=1/10,2/10,\dots,9/10$; 
\protect\subref{fig:theory} the limiting curves $\big(\beta_\alpha(s),s\big)$ for the same values of $\alpha$. The region bounding the limit shapes is the Logan-Shepp-Vershik-Kerov limit shape of Plancherel-random Young diagrams \cite{romik}.}
\label{fig:bumping-routes}
\end{figure}

\begin{align}
\Omega(u) &= \frac{2}{\pi} \left( u \sin^{-1}\left(\frac{u}{2}\right) + \sqrt{4-u^2} \right) & (|u|\le 2), \label{eq:def-omega} \\ 
F(u) &= \frac12 + \frac{1}{\pi} \left( \frac{u \sqrt{4-u^2}}{4} + \sin^{-1}\left( \frac{u}{2}\right) \right) & (|u|\le 2), \label{eq:def-semicircle-cdf} \\
u_\alpha(t) &= \sqrt{t} \, F^{-1}\left(\frac{\alpha}{t}\right) & (0\le \alpha \le t \le 1), \label{eq:ualpha} 
\\ 
v_\alpha(t) &= \sqrt{t} \,\Omega\!\left( F^{-1}\left(\frac{\alpha}{t}\right) \right) & (0\le \alpha \le t \le 1),  \label{eq:valpha} \\ 
x_\alpha(t) &= \frac{v_\alpha(t)+u_\alpha(t)}{2} & (0\le \alpha \le t \le 1),  
\label{eq:xalpha} 
\\
y_\alpha(t) &= \frac{v_\alpha(t)-u_\alpha(t)}{2} & (0\le \alpha \le t \le 1), 
\label{eq:yalpha} \\
\kappa(\alpha) &= y_\alpha(1) = \frac{\Omega(F^{-1}(\alpha)) - F^{-1}(\alpha)}{2} & (0\le \alpha \le 1).
\end{align}

The \textbf{limiting bumping route curves} are now defined as the one-parameter family $\big(\beta_\alpha(t)\big)_{0\le \alpha< 1}$ of functions, where for each $\alpha\in [0,1)$, $\beta_\alpha(\cdot)$ is given by
\begin{equation} 
\beta_\alpha(s) = x_\alpha ( y_\alpha^{-1}(s)) \qquad (0\le s\le \kappa(\alpha)). \label{eq:def-beta}
\end{equation}

Our main result is as follows.

\begin{thm}[Limit shapes of bumping routes]
\label{thm:main-result}
For each $0\le \alpha<1$, the curve $\beta_\alpha(\cdot)$ describes the limiting bumping route $\bumping_{T_{n-1},\alpha}$, in the following precise sense: for any $\epsilon>0$, we have that
\begin{align}
\prob &\left(
\left| \frac{k_{T_{n-1},\alpha}}{\sqrt{n}} - \kappa(\alpha) \right| > \epsilon
\right) \xrightarrow[n\to\infty]{} 0, \textrm{ and}
\label{eq:limit1} \\[3pt]
\prob & \left(
\max_{1 \le m\le k_{T_{n-1},\alpha}} 
\left| \frac{b_{T_{n-1},\alpha}(m)}{\sqrt{n}} -
\beta_\alpha\left( \frac{m}{\sqrt{n}} \wedge \kappa(\alpha) \right)
\right| > \epsilon
\right) \xrightarrow[n\to\infty]{} 0.
\label{eq:limit2}
\end{align}
\end{thm}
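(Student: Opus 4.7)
The strategy is to parameterize the bumping route by a ``time'' variable $t\in[\alpha,1]$ via a filtration of $T_{n-1}$ by the values of its entries, and to locate one point on the bumping route for every $t$ using our earlier asymptotic determinism result for the endpoint of an insertion. For $t\in[\alpha,1]$, let $T_{n-1}^{(t)}$ denote the sub-tableau of $T_{n-1}$ consisting of cells whose entries are $\leq t$. Equivalently, $T_{n-1}^{(t)}$ is the insertion tableau built from the subsequence $(X_i : X_i\leq t,\ 1\leq i\leq n-1)$, whose length $N_t$ satisfies $N_t/n\to t$ in probability.

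The central combinatorial observation is that, for every $t\in[\alpha,1]$, the bumping routes of the insertions of $\alpha$ into $T_{n-1}^{(t)}$ and into $T_{n-1}$ agree on the first $k^{(t)}:=k_{T_{n-1}^{(t)},\alpha}$ rows, and the newly added box of the former coincides with the point $\bigl(b_{T_{n-1},\alpha}(k^{(t)}),k^{(t)}\bigr)$ on the bumping route of the latter. Indeed, the cascade of bumped values in the insertion into $T_{n-1}^{(t)}$ is a strictly increasing sequence $\alpha=\alpha_0<\alpha_1<\cdots<\alpha_{k^{(t)}-1}$ lying in $[0,t]$. For each $j\leq k^{(t)}$, any entries of row $j$ of $T_{n-1}$ that are missing from $T_{n-1}^{(t)}$ necessarily exceed $t\geq\alpha_{j-1}$ and, by the row-increasing property, lie to the right of all entries of row $j$ present in $T_{n-1}^{(t)}$; a short case analysis then shows that the leftmost entry of row $j$ of $T_{n-1}$ that exceeds $\alpha_{j-1}$ occupies the same column as the corresponding bumping entry in $T_{n-1}^{(t)}$ (when $j<k^{(t)}$) or as the newly added box of $T_{n-1}^{(t)}$ (when $j=k^{(t)}$).

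To exploit this, we apply our previous asymptotic determinism result to $T_{n-1}^{(t)}$. The conditional distribution of the subsequence $\{X_i:X_i\leq t\}$ is i.i.d.\ $U[0,t]$, so after rescaling by $1/t$ it becomes an i.i.d.\ $U[0,1]$ sample of size $N_t$, and inserting $\alpha$ into $T_{n-1}^{(t)}$ is combinatorially equivalent to inserting $\alpha/t$ into the rescaled insertion tableau of size $N_t$. The previous result then states that the position of the newly added box concentrates in probability at $\sqrt{N_t}\bigl(x_{\alpha/t}(1),y_{\alpha/t}(1)\bigr)$. Combining $N_t=tn+o(n)$ with the scaling identities $\sqrt{t}\,x_{\alpha/t}(1)=x_\alpha(t)$ and $\sqrt{t}\,y_{\alpha/t}(1)=y_\alpha(t)$, immediate from \eqref{eq:ualpha}--\eqref{eq:yalpha}, we deduce that this position concentrates at $\sqrt{n}\bigl(x_\alpha(t),y_\alpha(t)\bigr)$. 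Together with the combinatorial identification above, this yields, for each fixed $t\in(\alpha,1]$, the convergences $k^{(t)}/\sqrt{n}\to y_\alpha(t)$ and $b_{T_{n-1},\alpha}(k^{(t)})/\sqrt{n}\to x_\alpha(t)$ in probability. Specializing to $t=1$, where $T_{n-1}^{(1)}=T_{n-1}$, gives \eqref{eq:limit1} with $\kappa(\alpha)=y_\alpha(1)$.

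To upgrade this pointwise statement to the uniform convergence \eqref{eq:limit2}, we fix a deterministic partition $\alpha=t_0<t_1<\cdots<t_M=1$ fine enough that consecutive values of $x_\alpha$ and $y_\alpha$ differ by less than $\epsilon/3$, apply the pointwise statement and a union bound to obtain high-probability control at the anchor rows $k^{(t_i)}$, and fill in the gaps between consecutive anchors using monotonicity: the bumping sequence $m\mapsto b_{T_{n-1},\alpha}(m)$ is nonincreasing (a deterministic property of any bumping route), $t\mapsto y_\alpha(t)$ is increasing, $t\mapsto x_\alpha(t)$ is decreasing, and $\beta_\alpha$ is continuous on $[0,\kappa(\alpha)]$. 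The main technical obstacle is making the asymptotic determinism uniform in the parameter $t$ and in the random size $N_t$; this is handled by a concentration bound on $N_t-tn$ together with the natural monotone coupling of the filtration $\{T_{n-1}^{(t)}\}_{t\in[0,1]}$ induced by the common underlying sequence $X_1,\dots,X_{n-1}$.
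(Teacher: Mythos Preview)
Your proposal is correct and follows essentially the same approach as the paper: parametrize the bumping route by the sublevel tableaux $T_{n-1}^{(t)}$, identify the exit point of the route from $T_{n-1}^{(t)}$ with the new box created by inserting $\alpha$ into $T_{n-1}^{(t)}$, rescale to reduce to the asymptotic determinism result applied with parameter $\alpha/t$ in a tableau of size $N_t\approx tn$, and then pass from pointwise to uniform convergence via the coordinate-wise monotonicity of both the bumping route and the limiting curve $t\mapsto(x_\alpha(t),y_\alpha(t))$. Your explicit combinatorial verification that the two bumping routes agree on the first $k^{(t)}$ rows is somewhat more detailed than the paper's treatment, but the underlying argument is the same.
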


The problem of understanding the limit shapes of the bumping routes, which our result answers, was posted by C.~Moore on his personal web page (along with simulation results similar to our Figure~\ref{fig:simulation}) in 2006 \cite{moore}. 

\section{Preliminary remarks}

As a first step towards proving Theorem~\ref{thm:main-result}, let us recall some facts from the theory of Plancherel measure, which will immediately prove \eqref{eq:limit1} and also help elucidate the somewhat involved definition of the family of limiting curves $\beta_\alpha(\cdot)$. First, when discussing Plancherel-random Young diagrams and Young tableaux it is convenient to use the so-called rotated (also known as ``Russian'') coordinate system, related to the standard $x$-$y$ coordinates by the linear change of variables
\begin{equation} \label{eq:russian}
\begin{array}{c} u = x-y, \\[5pt] v=x+y. \end{array} 
\end{equation}
In this coordinate system, the curve $v=\Omega(u)$, where $\Omega(u)$ is defined in~\eqref{eq:def-omega}, describes the Logan-Shepp-Vershik-Kerov limit shape of Plancherel-random Young diagrams mentioned in the introduction.

Second, the function $F(u)$ defined in \eqref{eq:def-semicircle-cdf} is the cumulative distribution function of the semicircle distribution on $[-2,2]$; that is, we have
$$ F(u) = \frac{1}{2\pi} \int_{-2}^u \sqrt{4-s^2}\,ds \qquad (|u|\le 2). $$
Its importance for the present discussion is that, according to one of the main results of our previous paper \cite{romik-sniady}, the point 
\begin{equation} \label{eq:u-v-alpha-def}
\big(U(\alpha),V(\alpha)\big)=\left(F^{-1}(\alpha), \Omega\big(F^{-1}(\alpha)\big)\right)
\end{equation}
is the limiting scaled position (in rotated coordinates) of the new box added to the Robinson-Schensted shape after applying the insertion procedure with the number $\alpha \in[0,1]$ to the existing insertion tableau $T_{n-1}$. More precisely, when stated using our current terminology, the result \cite[Theorem~5.1]{romik-sniady}, which we dubbed the ``\textbf{asymptotic determinism of RSK insertion},'' says that the last position $\big(b_\alpha(k_{T_{n-1,\alpha}}),k_{T_{n-1,\alpha}}\big)$ of the bumping route $\bumping_{T_{n-1},\alpha}$ satisfies
\begin{multline} 
\label{eq:asym-det}
\frac{1}{\sqrt{n}} \big(b_\alpha(k_{T_{n-1,\alpha}})-k_{T_{n-1,\alpha}}, b_\alpha(k_{T_{n-1,\alpha}})+k_{T_{n-1,\alpha}}\big)
\\  \xrightarrow[n\to\infty]{\prob} \big(U(\alpha),V(\alpha)\big).
\end{multline}
After applying the inverse transformation of \eqref{eq:russian} to rewrite the result in $x$-$y$ coordinates, and noting that 
$$\kappa(\alpha)=y_\alpha(1)= \frac{V(\alpha)-U(\alpha)}{2},$$
we get the relation \eqref{eq:limit1}, the first claim of Theorem~\ref{thm:main-result}.

Next, turn to the pair of functions $\big(u_\alpha(t), v_\alpha(t)\big)$ defined in \eqref{eq:ualpha}--\eqref{eq:valpha}, with the associated pair $\big(x_\alpha(t),y_\alpha(t)\big)$ from \eqref{eq:xalpha}--\eqref{eq:yalpha} representing the same functions in $x$-$y$ coordinates. Note that for any fixed $\alpha$, the planar curve $\big(x_\alpha(t),y_\alpha(t)\big)_{\alpha\le t\le 1}$ is a reparametrized version of the curve $\big(\beta_\alpha(s),s\big)_{0\le s\le \kappa(\alpha)}$, which according to our claim \eqref{eq:limit2} is the limit shape of the bumping route $\bumping_{T_{n-1},\alpha}$ (one needs to note that $y_\alpha(\cdot)$ is a strictly decreasing function; see Lemma~\ref{lem:decreasing} below). 
It turns out that the parametrization of the curve as $\big(x_\alpha(t),y_\alpha(t)\big)$ is the correct one when trying to prove the limit shape result (although the parametrization $\big(\beta_\alpha(s),s\big)$ is the one that answers the original question). To see why, we need to explain the role of the parameter $t$. Note that for fixed $t$, the points $\big(u_\alpha(t), v_\alpha(t)\big)_{0\leq \alpha\leq 1}$ all lie on the curve $v=\sqrt{t} \, \Omega(u/\sqrt{t})$, which is a copy of the limit shape $v=\Omega(u)$ scaled down by the factor $\sqrt{t}$. The idea is that this scaled-down copy represents the limiting shape of the ``$t$-sublevel tableau'' of $T_{n-1}$---that is, the subset of boxes of $T_{n-1}$ containing an entry $\le t$. We will show below that the point $\big(u_\alpha(t),v_\alpha(t)\big)$ (or $\big(x_\alpha(t),y_\alpha(t)\big)$, in the usual coordinate system) corresponds to the limiting position, after scaling, of the point at which the bumping route $\bumping_{T_{n-1},\alpha}$ exits this sublevel tableau. The reason for this is that this exit position relates to the sublevel tableau in roughly the same way that the final bumping route position $\big(b_\alpha(k_{T_{n-1,\alpha}}),k_{T_{n-1,\alpha}}\big)$ relates to the entire tableau $T_{n-1}$, except that there is a scaling relation that causes the number being inserted to change from $\alpha$ to $\alpha/t$. A more precise formulation of this statement is discussed in the next section, after which we will see that our main result follows without much difficulty by another appeal to the ``asymptotic determinism of RSK'' theorem. A schematic illustration of the argument described above is shown in Figure~\ref{fig:summary-picture}.

\begin{figure}[t]
\begin{center}
\begin{picture}(320,300)(0,0)
\put(0,0){\scalebox{1.2}{\includegraphicsA[]{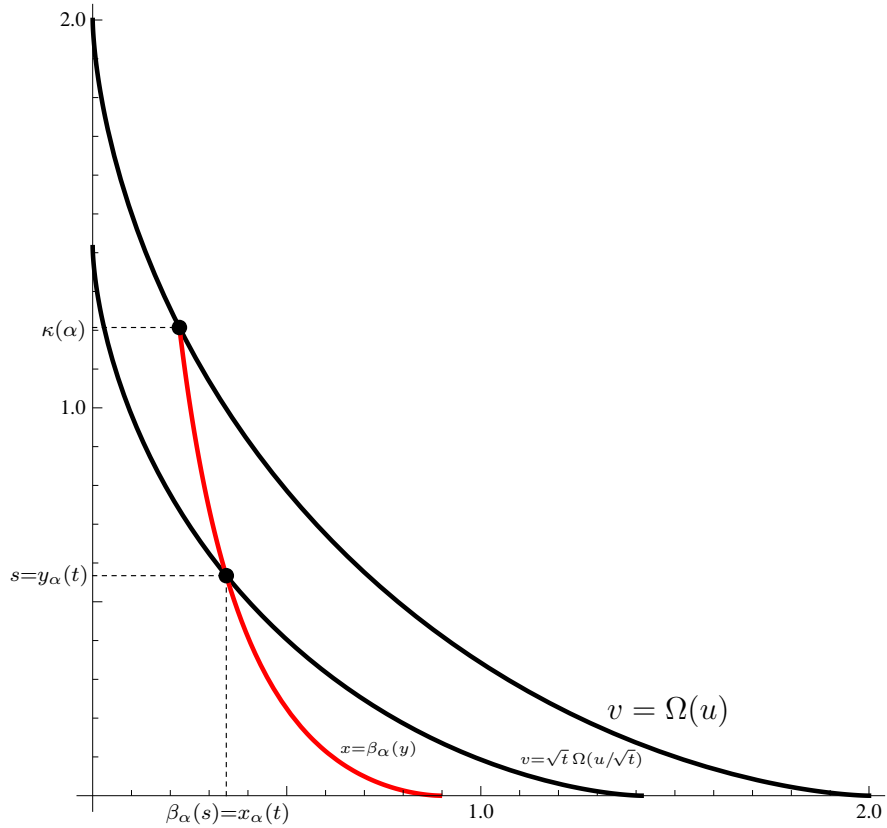}}}
\put(207,39){$v=\Omega(u)$}
\put(174,22){$\scriptscriptstyle v=\sqrt{t}\,\Omega(u/\!\sqrt{t})$}
\put(106,26){$\scriptscriptstyle x=\beta_\alpha(y)$}
\put(40,1){$\scriptstyle \beta_\alpha(s)=x_\alpha(t)$}
\put(-19,91){$\scriptstyle s=y_\alpha(t)$}
\put(-7,184){$\scriptstyle \kappa(\alpha)$}
\end{picture}
\caption{The meaning of the parameter $t$: for a fixed value of $\alpha$, the intersection $\big(u_\alpha(t),v_\alpha(t)\big)$ of the asymptotic bumping curve with the scaled-down copy  $v=\sqrt{t}\,\Omega(u/\sqrt{t})$ of the Logan-Shepp-Vershik-Kerov limit shape is computed by applying the asymptotic determinism theorem to the $t$-sublevel tableau; when the numbers in the sublevel tableau are scaled to the range $[0,1]$, the number $\alpha$ being inserted is scaled to~$\alpha/t$.}
\label{fig:summary-picture}
\end{center}
\end{figure}

We conclude this section with another small but useful observation.

\begin{lem} \label{lem:decreasing}
The function $y_\alpha(\cdot)$ is strictly decreasing. In particular, the limit shape functions $\beta_\alpha(\cdot)$ defined in \eqref{eq:def-beta} are well-defined.
\end{lem}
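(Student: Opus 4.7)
The plan is to reduce the lemma to verifying that $y_\alpha'(t)$ has constant nonzero sign on the open interval $(\alpha,1)$; once this is in place, $y_\alpha$ is forced to be strictly monotone on $[\alpha,1]$, and the well-definedness of $\beta_\alpha = x_\alpha \circ y_\alpha^{-1}$ is an immediate consequence.

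First, I would differentiate $y_\alpha(t) = \tfrac{1}{2}(v_\alpha(t) - u_\alpha(t))$ by the chain rule. Writing $u := F^{-1}(\alpha/t)$ for brevity, the inverse-function rule gives $(F^{-1})'(\alpha/t) = 2\pi/\sqrt{4-u^2}$, and a direct application of the product rule to~\eqref{eq:def-omega} yields the clean identity $\Omega'(u) = \tfrac{2}{\pi}\sin^{-1}(u/2)$, all other terms cancelling pairwise. Substituting these into $u_\alpha'(t)$ and $v_\alpha'(t)$ and collecting terms leads to an identity of the shape
\[
2 y_\alpha'(t) \;=\; \frac{\Omega(u) - u}{2\sqrt{t}} \;+\; \frac{2\alpha\bigl(\pi - 2\sin^{-1}(u/2)\bigr)}{t^{3/2}\sqrt{4-u^2}}.
\]

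Next, I would read off the sign. For $t \in (\alpha,1)$ the ratio $\alpha/t$ lies strictly in $(\alpha,1)$, so $u = F^{-1}(\alpha/t) < F^{-1}(1) = 2$. For such $u$ one has $\sin^{-1}(u/2) < \pi/2$, making the second summand strictly positive. One also has $\Omega(u) > u$, which follows by noting that the derivative $\tfrac{2}{\pi}\sin^{-1}(u/2) - 1$ of $\Omega(u) - u$ is strictly negative on $(-2,2)$ while the value at $u = 2$ is zero. Both summands are therefore strictly positive throughout $(\alpha,1)$, so $y_\alpha'$ is nowhere zero there, and $y_\alpha$ is strictly monotone on $[\alpha,1]$. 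This yields a continuous strict inverse $y_\alpha^{-1}$ on the interval with endpoints $y_\alpha(\alpha) = 0$ and $y_\alpha(1) = \kappa(\alpha)$, so $\beta_\alpha(s) = x_\alpha(y_\alpha^{-1}(s))$ is unambiguously defined on $[0,\kappa(\alpha)]$. The one point that requires a bit of care is organizing the intermediate algebra so that positivity of each summand in the displayed formula is manifest; once that identity is in hand, everything else is immediate from the chain rule and elementary inequalities.
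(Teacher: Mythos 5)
Your argument is correct in substance, and it takes a genuinely different route from the paper's. You differentiate: writing $u=F^{-1}(\alpha/t)$, your identities $\Omega'(u)=\tfrac{2}{\pi}\sin^{-1}(u/2)$ and $(F^{-1})'(\alpha/t)=2\pi/\sqrt{4-u^2}$ are right, and I have checked that your displayed formula for $2y_\alpha'(t)$ is exactly what the product and chain rules give; the sign analysis ($\Omega(u)>u$ and $\sin^{-1}(u/2)<\pi/2$ for $u<2$) then shows $y_\alpha'>0$ on $(\alpha,1)$. The paper avoids all of this calculus in $t$ by factoring $y_\alpha(t)=\sqrt{t}\,Y(\alpha/t)$ with $Y(\alpha)=\tfrac12\bigl(\Omega(F^{-1}(\alpha))-F^{-1}(\alpha)\bigr)$: since $Y$ is nonnegative and strictly decreasing (which uses only the same fact $\Omega'(u)<1$ that you use to get $\Omega(u)>u$), both the factor $\sqrt{t}$ and the factor $Y(\alpha/t)$ move in the same direction as $t$ grows, and strict monotonicity follows in two lines. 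Your approach buys an explicit formula for the derivative; the paper's buys brevity and sidesteps the boundary degeneracies that your formula has. Two small points to tidy up. First, at $\alpha=0$ your expression is indeterminate: then $u=F^{-1}(0)=-2$, so $\sqrt{4-u^2}=0$ and the second summand reads $0/0$ (it is not ``strictly positive''); this case should be dispatched separately, e.g.\ by noting $y_0(t)=2\sqrt{t}$ directly. Second, note that what you prove (and what the paper's own proof also establishes, despite the word ``decreasing'' in the statement of the lemma) is that $y_\alpha$ is strictly \emph{increasing} in $t$, running from $y_\alpha(\alpha)=0$ to $y_\alpha(1)=\kappa(\alpha)$; the wording of the lemma is evidently a slip, and strict monotonicity is all that is needed for $\beta_\alpha$ in \eqref{eq:def-beta} to be well-defined, so your conclusion stands.
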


\begin{proof}
Denote 
$$\big(X(\alpha),Y(\alpha)\big)=\frac12\big(V(\alpha)+U(\alpha), V(\alpha)-U(\alpha)\big),$$ 
where $U(\alpha),V(\alpha)$ are defined in \eqref{eq:u-v-alpha-def}. First, note that $Y(\cdot)$ is strictly decreasing, since it is the composition of the increasing function $\alpha\mapsto F^{-1}(\alpha)$ with the function $u\mapsto \frac12(\Omega(u)-u)$, the latter being decreasing (this can be seen by interpreting this function geometrically, or by differentiating and using the fact that $\Omega'(u)=\frac{2}{\pi}\sin^{-1}(u/2)$). Now, 
if $t<t'$ are numbers in $[\alpha,1]$ then
$$
y_\alpha(t) = \sqrt{t}\, Y(\alpha/t) < \sqrt{t'}\, Y(\alpha/t) < \sqrt{t'}\, Y(\alpha/t') = y_\alpha(t'),
$$
proving the claim.
\end{proof}

\section{Plancherel measure and sublevel tableaux}

Recall that the distribution of the insertion tableau $P_n$ associated via the Robinson-Schensted correspondence with a uniformly random permutation $\sigma^{(n)}$ in $\symgroup_n$ is also called the Plancherel measure of order $n$ (this refers to the measure on standard Young tableaux; the distribution of the shape of this tableau is the Plancherel measure on Young diagrams of order $n$ that was mentioned in the introduction). By the remarks made in the introduction, a tableau $P_n$ with this distribution arises by taking the tableau $T_n$ and ``standardizing'' it by replacing each entry $X_k$ with the ranking of $X_k$ in the list $X_1,\dots,X_n$ (i.e., the number $\sigma(k)$ such that $X_k = X^{(n)}_{\sigma(k)}$, where $X^{(n)}_1\le \dots \le X^{(n)}_n$ are the order statistics of $X_1,\dots,X_n$). 
Note also that the vector 
$(X^{(n)}_1, \dots , X^{(n)}_n)$
of order statistics is independent of the tableau $P_n$ and is distributed uniformly on the simplex $$\Delta_n=\{ (x_1,\dots,x_n)\,:\, 0\le x_1\le \dots \le x_n\le 1\}.$$

It follows that, conversely, if we start with a random standard Young tableau $P_n$ distributed according to the Plancherel measure of order $n$ and a random vector $(W_1,\dots,W_n)$ that is independent of $P_n$ and is distributed uniformly on $\Delta_n$, then the tableau $T_n'$ obtained by replacing each entry $p$ of $P_n$ by $W_p$ is equal in distribution to $T_n$.

We will now apply these observations to prove a simple lemma about sublevel tableaux.
As mentioned above, for any $0<t\le1$, the \textbf{$t$-sublevel tableau of $T_{n-1}$}, which we denote by $T_{n-1}^{(t)}$, is the subtableau of $T_{n-1}$ consisting of those boxes with entries $\le t$. An essential fact that makes our main result possible is a self-similarity property that says that $T_{n-1}^{(t)}$ is distributed roughly as a scaled version of $T_k$ for $k \approx tn$. Since the number of entries in $T_{n-1}^{(t)}$ is itself random, we need to condition on its value to make this statement precise. The details are as follows.

\begin{lem} 
\label{lem:self-similarity}
Let $0<t\le 1$ and $0\le k\le n-1$. 
\begin{itemize}
\item The number $|T_{n-1}^{(t)}|$ of boxes in $T_{n-1}^{(t)}$ satisfies 
\begin{equation}
\label{eq:lln}
\frac{|T_{n-1}^{(t)}|}{n-1} \xrightarrow[n\to\infty]{\prob} t.
\end{equation}
\item Conditioned on the event $|T_{n-1}^{(t)}|=k$, the rescaled sublevel tableau $\frac{1}{t} T_{n-1}^{(t)}$ (where the rescaling means that each entry of $T_{n-1}^{(t)}$ is divided by $t$) is equal in distribution to $T_k$.
\end{itemize}
\end{lem}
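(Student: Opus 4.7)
The plan is to handle the two parts separately. For the first, observe that the multiset of entries of $T_{n-1}$ is exactly $\{X_1,\dots,X_{n-1}\}$, so $|T_{n-1}^{(t)}|$ equals the number of the i.i.d.\ inputs $X_i$ lying in $[0,t]$, a Binomial$(n-1,t)$ random variable. The weak law of large numbers applied to the Bernoulli$(t)$ indicators $\mathbf{1}_{\{X_i\le t\}}$ yields \eqref{eq:lln} immediately.

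For the second part, I would first establish the following pathwise structural identity: for any distinct reals $x_1,\dots,x_{n-1}$ and any threshold $t$, the $t$-sublevel of $P(x_1,\dots,x_{n-1})$ coincides with $P(x_{i_1},\dots,x_{i_K})$, where $i_1 < \dots < i_K$ are the indices $i$ with $x_i\le t$. To prove this, I would induct on $n-1$ and verify that inserting a value $z>t$ into an increasing tableau cannot displace any entry $\le t$: at each row, every entry $\le t$ is strictly smaller than $z$, so $z$ is placed weakly to the right of all such entries, and the bumped entry (if any) strictly exceeds $z$, hence $t$, so the argument propagates to the next row. Consequently the sublevel is unchanged by insertions of values exceeding $t$, while insertions of values $\le t$ perform exactly the same bumping in the sublevel as in the full tableau. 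This is a standard structural property of Schensted insertion; see, e.g., \cite{fulton}.

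Given this identity, the distributional statement follows by conditioning. Let $I=\{i : X_i\le t\}$, ordered increasingly as $i_1<\dots<i_K$, so that pathwise $T_{n-1}^{(t)} = P(X_{i_1},\dots,X_{i_K})$. Condition on $K=k$ and on any specific realization of $I$: the conditional joint distribution of $(X_{i_1},\dots,X_{i_k})$ is that of $k$ i.i.d.\ $U[0,t]$ variables, so $(X_{i_1}/t,\dots,X_{i_k}/t)$ are conditionally i.i.d.\ $U[0,1]$. Because the insertion procedure depends only on the relative order of its inputs, scaling the inputs by $1/t$ scales every entry of the output tableau by $1/t$, giving
\[
\tfrac{1}{t}T_{n-1}^{(t)} = P(X_{i_1}/t,\dots,X_{i_k}/t) \stackrel{d}{=} T_k.
\]
The conditional distribution on the right does not depend on the particular index set $I$, so the identity persists after averaging over $I$ conditionally on $\{K=k\}$, completing the proof.

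The main obstacle is the structural lemma on sublevel preservation under insertion of values exceeding $t$; the remaining distributional manipulations reduce to routine applications of the scaling invariance of RSK and the elementary fact that conditioning i.i.d.\ uniforms on the events $\{X_i\le t\}$ and $\{X_i> t\}$ preserves the conditional i.i.d.\ uniform structure on the relevant subintervals.
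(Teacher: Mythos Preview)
Your argument is correct. For the first part you match the paper exactly. For the second part you take a genuinely different route from the paper's proof: the paper argues by \emph{standardizing} $T_{n-1}$ to a Plancherel-random standard tableau $P_{n-1}$, taking its $k$-sublevel $P_{n-1}^{(k)}$, invoking the consistency of Plancherel measures to conclude that $P_{n-1}^{(k)}$ is Plancherel-random of order $k$, and then \emph{destandardizing} with the order statistics of the small inputs. Your approach instead proves the pathwise identity $T_{n-1}^{(t)}=P(X_{i_1},\dots,X_{i_K})$ directly from the combinatorics of row insertion (values exceeding $t$ never bump values at most $t$), and then conditions on the index set. Your route is more elementary in that it bypasses the Plancherel consistency property and the standardize/destandardize bookkeeping; the paper's route, on the other hand, packages the combinatorial content into a single citation-friendly fact about Plancherel measures. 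The two are closely related underneath: the Plancherel consistency statement the paper uses is essentially the standardized version of your structural lemma at the top threshold.
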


\begin{proof}
Recall that the entries of $T_{n-1}$ are the i.i.d.~uniform random numbers $X_1,\dots,X_{n-1}$, so the entries of the sublevel tableau $T_{n-1}^{(t)}$ consist of the subset of the numbers $X_1,\dots,X_{n-1}$ which are $\le t$. It follows that the random variable $Z_n=|T_{n-1}^{(t)}|$ is given by
$$ Z_n =\sum_{j=1}^{n-1} 1_{\{X_j\le t\}}, $$
so the first claim follows from the law of large numbers. 

Denote by $(Y_1,\dots,Y_{Z_n})$ the sequence (of random length $Z_n$) of those $X_j$'s (for $1\le j\le n-1$) for which $X_j\le t$, in the order in which they appear. From elementary probability theory, it is easy to see that, conditioned on the event $\{Z_n=k\}$, the random variables $Y_1,\dots,Y_k$ are independent and uniformly distributed in $[0,t]$. But now observe that (still conditioning on $\{Z_n=k\}$) the sublevel tableau $T_{n-1}^{(t)}$ can be constructed as follows:
\begin{description}
\item[Standardize] \hfill \\Replace $T_{n-1}$ by a standard Young tableau $P_{n-1}$, where each entry of $P_{n-1}$ is the ranking of the corresponding entry of $T_{n-1}$ in the list $X_1,\dots,X_{n-1}$.
\item[Sublevel] \hfill \\ Take the $k$-sublevel tableau $P_{n-1}^{(k)}$ of $P_{n-1}$.
\item[Destandardize] \hfill \\Replace each entry $p$ of $P_{n-1}^{(k)}$ by the $p$th order statistic $Y^{(k)}_p$ of the sequence $Y_1,\dots,Y_k$.
\end{description}
By the remarks made at the beginning of this section, $P_{n-1}$ is a Plancherel-random standard Young tableau of order $n-1$. We now use the elementary fact that the Plancherel measures are a consistent family of probability measures, in the sense that for any $k\le m$, the $k$-sublevel tableau of a Plancherel-random tableau $Q_m$ of order $m$ is a Plancherel-random tableau of order $k$. (The case $k=m-1$ of this claim corresponds to the simple operation of removing the maximal entry of a Plancherel-random tableau; this clearly implies the general case by induction, and the fact that the claim is true in this case is a version of a well-known property of the Plancherel measures, mentioned for example in \cite[Lemma~1.25]{romik}.)
So, the tableau $P^{(k)}_{n-1}$ is distributed according to the Plancherel measure of order $k$. Finally, since (still conditioning on the event $\{Z_n=k\}$ as before) the vector of order statistics 
$(Y^{(k)}_1,\dots,Y^{(k)}_k)=(X^{(n)}_1,\dots,X^{(n)}_k)$ 
is independent of $P_{n-1}$ (and hence also of $P^{(k)}_{n-1}$) and is distributed like $t$ times a random vector distributed uniformly in $\Delta_k$, again by the remarks made above we have that $T^{(t)}_{n-1}$ is (conditionally on $\{Z_n=k\}$) equal in distribution to $t\cdot T_k$.
\end{proof}

\section{Finishing the proof}

To prove \eqref{eq:limit2}, we first reparametrize the bumping route $\bumping_{T_{n-1},\alpha}$ according to the parameter $t$ associated with the sublevel tableaux. For each $0\le \alpha< 1$, this reparametrized bumping route will now be a random function $\Phi_{n,\alpha}:[\alpha,1]\to\mathbb{N}\times\mathbb{N}$ defined by
\begin{equation} \label{eq:bumping-reparam}
\Phi_{n,\alpha}(t) = \big(b_{T_{n-1},\alpha}(m),m\big)
\end{equation}
where for each $t$ we  denote by $m$ the minimal number for which \linebreak $\big(b_{T_{n-1},\alpha}(m),m\big)$ lies outside the sublevel tableau $T_{n-1}^{(t)}$. Note that almost surely we have that 
\begin{align*}
\Phi_{n,\alpha}(\alpha)=& \big(b_{T_{n-1},\alpha}(1),1\big),\\ 
\Phi_{n,\alpha}(1)=& \big(b_{T_{n-1},\alpha}(k_{T_{n-1},\alpha}),k_{T_{n-1},\alpha}\big), 
\end{align*}
 and the range of $\Phi_{n,\alpha}$ consists of the entire bumping route $\bumping_{T_{n-1},\alpha}$.

\begin{thm}
For any $0\le \alpha<1$ and $\epsilon>0$ we have
\begin{equation} \label{eq:stronger}
\prob\left( \max_{\alpha\le t\le 1}  \left\lVert
\frac{\Phi_{n,\alpha}(t)}{\sqrt{n}} - \big(x_\alpha(t),y_\alpha(t)\big)
\right\rVert > \epsilon
\right) \xrightarrow[n\to\infty]{} 0.
\end{equation}
\end{thm}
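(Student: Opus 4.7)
The plan is to reduce the problem, for each fixed $t$, to the asymptotic determinism theorem \eqref{eq:asym-det} via the self-similarity of sublevel tableaux from Lemma~\ref{lem:self-similarity}, obtain pointwise-in-$t$ convergence, and then upgrade to uniform convergence on $[\alpha,1]$ by combining coordinatewise monotonicity of $\Phi_{n,\alpha}(\cdot)$ with continuity of the limit curve $(x_\alpha(t),y_\alpha(t))$.

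The first and most delicate step is the structural identification: $\Phi_{n,\alpha}(t)$ coincides, with probability tending to $1$, with the position of the new box produced by inserting $\alpha$ into the sublevel tableau $T_{n-1}^{(t)}$ viewed as a standalone increasing tableau. Indeed, so long as the bumping of $\alpha$ in $T_{n-1}$ stays inside $T_{n-1}^{(t)}$, the two insertion procedures coincide step by step; at the first exit row $j$, the leftmost entry of row $j$ of $T_{n-1}$ exceeding the incoming value is an entry $>t$ sitting at position $(\lambda_j^{(t)}+1,\,j)$, where $\lambda_j^{(t)}$ is the length of row $j$ of $T_{n-1}^{(t)}$, and this is precisely where the corresponding insertion into $T_{n-1}^{(t)}$ adds its new box. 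The only way this identification can fail is that the bumping of $\alpha$ in $T_{n-1}$ itself terminates inside $T_{n-1}^{(t)}$; for $t<1$ this event has probability tending to $0$, because by \eqref{eq:asym-det} the scaled endpoint of the full bumping route converges to a point lying strictly outside the scaled sublevel boundary $v=\sqrt{t}\,\Omega(u/\sqrt{t})$.

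With this identification in hand, I fix $t\in(\alpha,1]$ and invoke Lemma~\ref{lem:self-similarity}: conditionally on $\{|T_{n-1}^{(t)}|=k\}$ the rescaled tableau $T_{n-1}^{(t)}/t$ is distributed as $T_k$, and since dividing all entries by a positive constant does not change bumping positions, inserting $\alpha$ into $T_{n-1}^{(t)}$ places its new box at the same location as inserting $\alpha/t$ into $T_k$. Applying \eqref{eq:asym-det} to the latter insertion and scaling by $1/\sqrt{k}$ gives convergence to $\big(X(\alpha/t),Y(\alpha/t)\big)$ in probability; combining with the law of large numbers $|T_{n-1}^{(t)}|/(n-1)\xrightarrow[n\to\infty]{\prob}t$ from the same lemma yields the pointwise conclusion
\begin{equation*}
\frac{\Phi_{n,\alpha}(t)}{\sqrt{n}} \xrightarrow[n\to\infty]{\prob} \sqrt{t}\,\bigl(X(\alpha/t),\,Y(\alpha/t)\bigr) = \bigl(x_\alpha(t),\,y_\alpha(t)\bigr).
\end{equation*}
The boundary case $t=\alpha$ is handled by an analogous argument (inserting $\alpha/\alpha=1$ into $T_k$ simply appends a box at the end of the first row).

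For the uniform upgrade, the point is that $t\mapsto\Phi_{n,\alpha}(t)$ is a step function whose row coordinate is weakly increasing and whose column coordinate is weakly decreasing in $t$, because enlarging the sublevel tableau can only push the exit point further along the bumping route, which itself has weakly decreasing columns. Given $\epsilon>0$, the (continuous) limit is uniformly continuous on $[\alpha,1]$, so I choose a finite mesh $\alpha=t_0<t_1<\cdots<t_N=1$ on which $(x_\alpha,y_\alpha)$ varies by at most $\epsilon/3$ per subinterval, apply the pointwise conclusion at each $t_i$ via a finite union bound, and use the coordinatewise monotonicity to sandwich each coordinate of $\Phi_{n,\alpha}(t)/\sqrt{n}$ between its values at the adjacent mesh points; a triangle inequality then bounds the deviation from $(x_\alpha(t),y_\alpha(t))$ uniformly by $O(\epsilon)$. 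The main obstacle is the clean identification in the second paragraph together with disposing of the degenerate termination scenario; once these are in place, the conditioning on the random cardinality $|T_{n-1}^{(t)}|$ when invoking \eqref{eq:asym-det} and the Glivenko-Cantelli-style mesh argument are routine.
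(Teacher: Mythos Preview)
Your argument is correct and follows essentially the same route as the paper: reduce the pointwise statement to \eqref{eq:asym-det} via the self-similarity of sublevel tableaux (Lemma~\ref{lem:self-similarity}) after conditioning on $|T_{n-1}^{(t)}|$, then upgrade to uniform convergence using the coordinatewise monotonicity of $\Phi_{n,\alpha}$ together with the continuity of $t\mapsto(x_\alpha(t),y_\alpha(t))$ on a finite mesh. One small remark: the ``degenerate termination scenario'' you worry about never occurs, so the structural identification holds exactly (almost surely), not merely with high probability. The last point of the bumping route in $T_{n-1}$ is a new box lying outside $T_{n-1}$ and hence outside every sublevel tableau, so the exit index $m$ always exists; and a short case check (which you essentially carried out) shows that at the exit row one necessarily has $b_{T_{n-1},\alpha}(m)=\lambda_m^{(t)}+1$, which is precisely where the insertion of $\alpha$ into $T_{n-1}^{(t)}$ places its new box.
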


\begin{proof}
Fix $\alpha\in [0,1)$. First, we prove the weaker statement that for any $\epsilon>0$ and $t\in[\alpha,1]$ we have
\begin{equation} \label{eq:weaker}
\prob\left( \left\lVert
\frac{\Phi_{n,\alpha}(t)}{\sqrt{n}} - \big(x_\alpha(t),y_\alpha(t)\big)
\right\rVert > \epsilon
\right) \xrightarrow[n\to\infty]{} 0.
\end{equation}
Denote $Z_n=|T_{n-1}^{(t)}|$ as before, and let $\delta>0$ be some small number (depending on $\epsilon$) whose value will be fixed shortly. We have
\begin{multline}
 \prob\left( \left\lVert
\frac{\Phi_{n,\alpha}(t)}{\sqrt{n}} - \big(x_\alpha(t),y_\alpha(t)\big)
\right\rVert > \epsilon
\right)
 \\ 
\shoveleft{\leq
\prob\left(  
\left| \frac{Z_n}{n-1} - t \right| > \delta
\right)}
 \\ 
{ \hspace{42pt} + \hspace{-5pt}
\sum_{\left|\frac{k}{n-1}-t\right| \le \delta}  \hspace{-5pt} \prob(Z_n=k)
\prob\left( \left\lVert
\frac{\Phi_{n,\alpha}(t)}{\sqrt{n}} - \big(x_\alpha(t),y_\alpha(t)\big)
\right\rVert > \epsilon \,\Big|\, Z_n=k
\right)}
 \\ 
\shoveleft{\leq
\prob\left(  
\left| \frac{Z_n}{n-1} - t \right| > \delta
\right)}
 \\ 
{ + 
\max_{\left|\frac{k}{n-1}-t\right| \le \delta} 
\prob\left( \left\lVert
\frac{\Phi_{n,\alpha}(t)}{\sqrt{n}} - \big(x_\alpha(t),y_\alpha(t)\big)
\right\rVert > \epsilon \,\Big|\, Z_n=k
\right).}
\label{eq:prob-twoparts}
\end{multline}
In the last expression, the first term tends to $0$ as $n\to\infty$, by \eqref{eq:lln}. 
Let $k=k(n)$ be the value for which the maximum of the second term is attained.
Note that the second claim of Lemma~\ref{lem:self-similarity} implies that the conditional probability in the second term can be replaced by its unconditional counterpart
\begin{multline}
\label{eq:prob-replaced}
\prob \left( \left\lVert
\frac{\Phi_{k+1,\alpha/t}(1)}{\sqrt{n}} - \big(x_\alpha(t),y_\alpha(t)\big)
\right\rVert > \epsilon 
\right)
\\ =
\prob\left( \left\lVert
\sqrt{\frac{k}{n}}\ \frac{\Phi_{k+1,\alpha/t}(1)}{\sqrt{k}} - \sqrt{t}\ \big(x_{\alpha/t}(1),y_{\alpha/t}(1)\big)
\right\rVert > \epsilon 
\right). 
\end{multline}
If we had the precise equality $k=tn$, it would immediately follow from \eqref{eq:asym-det} that this probability tends to $0$ as $n$ (and therefore also $k$) tends to $\infty$. As it is, such an equality does not hold; however, we restricted $k$ to a range such that 
$$
t-\delta \le \liminf_{n\to\infty} \frac{k}{n} \leq \limsup_{n\to\infty} \frac{k}{n} \le t+\delta.
$$
This is good enough, since it is easy to check that if $\delta$ is taken (as a function of $\epsilon$) to be a small enough positive number, then the right-hand side of \eqref{eq:prob-replaced} can be bounded from above by
\begin{multline}
\prob\left( \left\lVert
\frac{\Phi_{k+1,\alpha/t}(1)}{\sqrt{k}} - \big(x_{\alpha/t}(1),y_{\alpha/t}(1)\big)
\right\lVert > \frac{\epsilon}{2}
\right) 
 \\ +
\prob\left( \left\lVert
\frac{\Phi_{k+1,\alpha/t}(1)}{\sqrt{k}} \right\rVert > 3\sqrt{2}
\right). 
\label{eq:prob-bound2}
\end{multline}
The first probability tends to $0$ as $n\to\infty$ 
by \eqref{eq:asym-det}. The second probability is bounded by the probability that a Plancherel-random Young diagram of order $k$ has a row or column of length $\ge 3\sqrt{k}$; it is well-known that this probability decreases to $0$ at a rate that is exponential in $\sqrt{k}$ (see \cite[Lemma~1.5]{romik}). Thus, combining these observations with \eqref{eq:prob-twoparts}, \eqref{eq:prob-replaced} and the bound \eqref{eq:prob-bound2} proves \eqref{eq:weaker}.

Finally, to finish the proof we need to show that \eqref{eq:weaker} implies \eqref{eq:stronger}. This is a standard argument: first, \eqref{eq:weaker} clearly implies a version of \eqref{eq:stronger} in which the maximum is taken over finitely many values $\alpha \le t_1 < \dots < t_p \le 1$ of $t$. Second, since both the functions $t\mapsto \Phi_{n,\alpha}(t)$ and $t\mapsto \big(x_\alpha(t),y_\alpha(t)\big)$ have the property that their $x$-coordinate is weakly decreasing and their $y$-coordinate is weakly increasing, 
and since $t\mapsto \big(x_\alpha(t),y_\alpha(t)\big)$ is continuous, knowing that the bound
$$\left\lVert \frac{\Phi_{n,\alpha}(t)}{\sqrt{n}} - \big(x_\alpha(t),y_\alpha(t)\big) \right\rVert > \epsilon$$ 
holds for all values of $t$ in a finite set that is sufficiently dense in $[\alpha,1]$ ensures that the same inequality (with $\epsilon$ replaced by, say, $2\epsilon$) will hold
for all $t\in [\alpha,1]$. The details are easy and are left to the reader.
\end{proof}

\begin{proof}[Proof of \eqref{eq:limit2}]
It is now easy to derive \eqref{eq:limit2} from \eqref{eq:stronger}. The idea is that the relation between $t$ and $m$ in \eqref{eq:bumping-reparam} can be inverted, expressing the $m$th point $\big(b_{T_{n-1},\alpha}(m),m\big)$ of the bumping route as $\Phi_{n,\alpha}(t(m))$ where $t(m)$ is the minimal value $t\ge \alpha$ for which the $y$-coordinate of $\Phi_{n,\alpha}(t)$ is equal to $m$. (Note that $t(\cdot)$ also depends on $n$ and $\alpha$, but for convenience we leave this dependence implicit in our notation.) Expressing \eqref{eq:stronger} in terms of $t(m)$ gives the convergence in probability
$$
\max_{1\le m\le k_{T_{n-1},\alpha}} \left\lVert 
\frac{\big(b_{T_{n-1},\alpha}(m),m\big)}{\sqrt{n}}-\big(x_\alpha(t(m)),y_\alpha(t(m))\big)\right\rVert \xrightarrow[n\to\infty]{\prob} 0,
$$
which can be broken down into two separate convergence relations,
\begin{equation}
\max_{1\le m\le k_{T_{n-1},\alpha}} \left| y_\alpha(t(m)) - \frac{m}{\sqrt{n}} \right|  \xrightarrow[n\to\infty]{\prob} 0,  
\label{eq:first-conv}
\end{equation}
\begin{equation}
\max_{1\le m\le k_{T_{n-1},\alpha}} \left| \frac{b_{T_{n-1},\alpha}(m)}{\sqrt{n}}-x_\alpha(t(m))\right| \xrightarrow[n\to\infty]{\prob} 0.
\label{eq:second-conv} 
\end{equation}
Now observe that $z\mapsto y_\alpha^{-1}(z\wedge \kappa(\alpha))$ is a continuous function on $[0,2\kappa(\alpha)]$. Combining this with \eqref{eq:first-conv} and the fact that (by \eqref{eq:limit1}) \linebreak $k_{T_{n-1},\alpha}/\sqrt{n} \le 2\kappa(\alpha)$ with asymptotically high probability, we see that
$$
\max_{1\le m\le k_{T_{n-1},\alpha}} \left| t(m) - y_\alpha^{-1}\left(\frac{m}{\sqrt{n}} \wedge \kappa(\alpha) \right) \right| \xrightarrow[n\to\infty]{\prob} 0.
$$
Finally, this relation, together with \eqref{eq:second-conv} and the fact that $x_\alpha(\cdot)$ is continuous, implies that
$$
\max_{1\le m\le k_{T_{n-1},\alpha}} \left| \frac{b_{T_{n-1},\alpha}(m)}{\sqrt{n}}-x_\alpha\left(y_\alpha^{-1}\left(\frac{m}{\sqrt{n}} \wedge \kappa(\alpha) \right)\right)\right| \xrightarrow[n\to\infty]{\prob} 0,
$$
which is exactly \eqref{eq:limit2}.
\end{proof}

\section*{Acknowledgments}

P.\'S.'s research has been supported by a grant number SN 101/1-1 from \emph{Deutsche Forschungsgemeinschaft}.
Dan Romik's work was supported by the National Science Foundation under grant DMS-0955584 and by grant \#228524 from the Simons Foundation.

\bibliographystyle{alpha}
\bibliography{bumping}

\end{document}